\documentclass{article}
\usepackage{graphicx} % Required for inserting images

\title{Average pairwise Hamming distance of a subset in a hypercube}

\usepackage{amsmath, amssymb, amsthm, url, tikz, verbatim, hyperref}

\newtheorem{theorem}{Theorem}[section]

\newtheorem{lemma}[theorem]{Lemma}

\newtheorem{question}[theorem]{Question}

\newtheorem{conjecture}[theorem]{Conjecture}

\usetikzlibrary{calc}
\usetikzlibrary{patterns}
\usetikzlibrary{decorations.markings}
\usetikzlibrary{arrows,shapes.geometric,%
	decorations.pathreplacing,shapes,shadows}

\date{}

\author{}

\author{ Yijia Fang\thanks{Department of Mathematics, National University of Singapore. Email: fangyijia@u.nus.edu.}
\and
Hao Huang\thanks{Department of Mathematics, National University of Singapore. Email: huanghao@nus.edu.sg. Research supported in part by a start-up grant at NUS and an MOE Academic Research Fund (AcRF) Tier 1 grant A-8003627.}}

\begin{document}

\maketitle

\begin{abstract}
In this note, we prove that for a subset $\mathcal{F}$ of $\{0,1\}^n$ of density $\frac{1}{8}$, its average pairwise Hamming distance is at least $\frac{n-3}{2}$, with equality attained only by $(n-3)$-dimensional subcubes. This answers a question of Lei Yu, and solves an important open case of a more general question posed by Ahlswede and Katona in 1977.
\end{abstract}

\section{Introduction}
Given two $n$-dimensional binary vectors $x, y$, we denote by $d_H(x,y)$ the Hamming distance between them, which is the number of coordinates $i \in [n]$ such that $x_i \neq y_i$. A natural question was raised by Ahlswede and Katona \cite{AK77} about half a century ago: for a given $n$, among all subsets $\mathcal{F} \subset \{0,1\}^n$ of fixed size $|\mathcal{F}|$ or fixed density $\alpha=\frac{|\mathcal{F}|}{2^n}$, what is the minimum average Hamming distance between two independent random vectors, each uniformly chosen from $\mathcal{F}$? 

Denote by $\mathbb E_{X,Y\sim\mathrm{Unif}(\mathcal F)}~d_H(X,Y)$ this average distance.  It is not hard to observe that every $d$-dimensional subcube of $\{0,1\}^n$ has average pairwise distance $\frac{d}{2}$. Intuitively, to minimize the average pairwise distance of $\mathcal{F}$, we would want the vectors to be as close to each other as possible to form a ``compact'' structure. And a subcube is a natural candidate for such a ``compact'' construction. In light of this construction, one can ask whether for every $0 \le d \le n$, among all subsets $\mathcal{F} \subset \{0,1\}^n$ with $|\mathcal{F}|=2^d$, the average pairwise Hamming distance is always minimized by a $d$-dimensional subcube. This is trivially true for $d=n$. Alth\"ofer and Sillke \cite{AS92} showed that the average pairwise distance of every $\mathcal{F}$ is at least $\frac{n+1}{2}-\frac{2^{n-1}}{|\mathcal{F}|}$, which for $d=n-1$ (i.e. $|\mathcal{F}|=2^{n-1}$) shows the $(n-1)$-dimensional subcube is the minimizer. Let 
$$\beta (n, M)=\min\left\{ \mathbb E_{X,Y\sim\mathrm{Unif}(\mathcal F)}~d_H(X,Y): \mathcal{F}\subset \{0,1\}^n, |\mathcal{F}|=M\right \}.$$
So the results above give $\beta(n, 2^n)=\frac{n}{2}$ and $\beta(n, 2^{n-1})=\frac{n-1}{2}$. Fu, Wei, and Yeung \cite{FWY01} developed a linear-programming approach and proved $\beta(n, M) \ge \frac{n}{2}-\frac{2^{n-2}}{M}$ for $M \le 2^{n-1}$, which implies $\beta(n, 2^{n-2})=\frac{n-2}{2}$. 

However, when $d=n-5$ and $n=19$, there exists a construction (see Remark 4, Page 70 of \cite{A20}) showing $\beta(n, 2^{n-5})<\frac{n-5}{2}$ by taking all vectors of Hamming weight at most $4$ and some of the weight-$5$ vectors. Indeed calculations show that when the density is small, for sufficiently large $n$, a subset resembling a Hamming ball has average pairwise Hamming distance smaller than that of the subcube of equal size. There has also been substantial work on exact values, lower bounds, and asymptotic behavior of $\beta(n,M)$ in various parameter ranges, see  \cite{AA94, FWY01, JKM85, K02, M08, XF98, Y25, YT19}.

Even though the subcube is not always the minimizer, its optimality when the density is $\frac{1}{8}$ or $\frac{1}{16}$ remains unknown. In particular, one might ask whether $\beta(n, 2^{n-k})$ is still equal to $\frac{n-k}{2}$ for $k \in \{3, 4\}$. The $k=3$ case was explicitly posed by Yu (Problem $2$ in \cite{Y25}) in his recent breakthrough paper which improved the classical level-$1$ bound (Chang's inequality as in \cite{C02}) in both the small- and large-density regimes.

In this short note, we resolve the $k=3$ case and prove the following theorem, which shows that the subcube is still optimal when the density is $\frac{1}{8}$.

\begin{theorem}\label{thm_main}
Suppose $\mathcal{F} \subset \{0,1\}^n$ has density $\frac{1}{8}$, i.e. $|\mathcal{F}|=2^{n-3}$, then its average pairwise Hamming distance is at least $\frac{n-3}{2}$. Equality holds if and only if $\mathcal{F}$ is a $(n-3)$-dimensional subcube of $\{0,1\}^n$.
\end{theorem}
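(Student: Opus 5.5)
We will work in Fourier language. Let $f=1_{\mathcal F}$, viewed as a function on $\{0,1\}^n$ with $\mathbb E f=\alpha=\frac18$, and write $\hat f(S)=\mathbb E[f(x)(-1)^{\sum_{i\in S}x_i}]$. Coordinate $i$ contributes to $d_H(X,Y)$ exactly when $X_i\ne Y_i$. For $X,Y$ independent uniform on $\mathcal F$, the correlation of the $\pm1$ variables $(-1)^{X_i}$ and $(-1)^{Y_i}$ is $(\hat f(\{i\})/\alpha)^2$. This gives the standard identity
\begin{equation*}
\mathbb E\, d_H(X,Y)=\frac n2-\frac{1}{2\alpha^2}\sum_{i=1}^n \hat f(\{i\})^2 .
\end{equation*}
The theorem is therefore equivalent to the level-$1$ inequality $W_1:=\sum_i \hat f(\{i\})^2\le 3\alpha^2=\frac{3}{64}$ for Boolean $f$ of density $\frac18$. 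Equality should force $f$ to be a subcube indicator, i.e.\ $f$ depends on three coordinates. The earlier bounds ($\alpha=\frac12,\frac14$) are exactly the cases $W_1\le \alpha^2$ and $W_1\le 2\alpha^2$, and this is the structure we will extend.

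The approach is a linear-programming / positivity argument in the spirit of Fu--Wei--Yeung. The constraints available are:
\begin{itemize}
\item Parseval: $\sum_S \hat f(S)^2=\alpha$.
\item Nonnegativity of the autocorrelation: $f*f\ge 0$.
\item Integrality: $f\in\{0,1\}$, hence $f^2=f$, so $\hat f(S)=\sum_T \hat f(T)\hat f(S\triangle T)$.
\end{itemize}

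\textbf{Step 1 (reduce to the level-$1$ vector).} Rotate the level-$1$ vector. Set $g(x)=\sum_i \hat f(\{i\})(-1)^{x_i}$, so that $W_1=\langle f,g\rangle$. Since $f$ is an indicator of density $\alpha$, we get $\langle f,g\rangle\le \alpha\cdot \mathbb E[g\mid \text{top }\alpha\text{-fraction of } g]$. This alone gives Chang/Talagrand type bounds, $W_1\lesssim \alpha^2\log(1/\alpha)$ with $2\ln 8\approx 4.16>3$. That is not sharp, so more structure is needed.

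\textbf{Step 2 (use levels $2$ and $3$).} Exploit $f^2=f$ at the empty set and at singletons, namely $\hat f(\{i\})=\sum_T \hat f(T)\hat f(T\triangle\{i\})$, pairing level $1$ with levels $0$ and $2$. Combined with Parseval, this caps the level-$2$ weight in terms of the level-$1$ weight. We would then write $\alpha-\alpha^2-W_1\ge W_2+W_{\ge3}$ and feed in a second inequality coming from evaluating $f*f$ (or the noise-stable quantity $\mathbb E[f(x)f(y)]$ at a suitable correlation $\rho$, e.g.\ $\rho=-1$ or $\rho=\frac12$) against nonnegativity. The hope is a weighted combination $\sum_k c_k W_k\ge0$ with $c_0=\alpha^2$, $c_1$ negative and tuned so that at density $\frac18$ it yields exactly $W_1\le3\alpha^2$.

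Concretely, we would first try the following hypercontractive-free route. Condition on the first coordinate to split $\mathcal F$ into $\mathcal F_0,\mathcal F_1$ of densities $\alpha_0+\alpha_1=2\alpha$ in the $(n-1)$-cube. Apply induction on $n$ together with the known exact results at densities $\frac14,\frac12$ and the Fu--Wei--Yeung bound $\beta(n,M)\ge \frac n2-\frac{2^{n-2}}{M}$ for general densities. This yields a recursive inequality whose extremal case is $\alpha_0=\frac14,\alpha_1=0$. Averaging the split over all $n$ coordinates, rather than choosing one, should absorb the cross term $\hat f(\{1\})^2$.

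\textbf{Main obstacle.} The hard step is the bound when the split densities are unbalanced but not extreme. The available lower bounds for $\beta$ at intermediate densities (e.g.\ $\frac n2-\frac{2^{n-2}}{M}$) are not tight there, and the recursion can lose a constant. We would handle this by proving a stronger auxiliary statement for all $M\le 2^{n-2}$: a piecewise-linear (in $1/\alpha$) lower bound interpolating the subcube values at densities $\frac14$ and $\frac18$. We would then verify that this bound is preserved under the splitting recursion by a two-variable convexity check.

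For the equality case, tracing the recursion shows that equality forces some coordinate to be constant on $\mathcal F$. The statement then reduces to the density-$\frac14$ equality case in dimension $n-1$, which is known to be a subcube.
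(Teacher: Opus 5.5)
Your reduction to the level-$1$ inequality $W_1[f]\le 3\alpha^2=\frac{3}{64}$ is correct and is exactly the paper's Lemma~\ref{lem_reduction}. The gap is that this inequality, which is the entire content of the theorem, is never proved. Step~2 only expresses the hope that some combination $\sum_k c_kW_k\ge 0$ exists, and the induction is not carried out. Worse, as sketched it cannot close.

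Write $w(\alpha)$ for the level-$1$ bound carried at density $\alpha$, so your $\beta$-bound is $\frac n2-\frac{w(\alpha)}{2\alpha^2}$. Splitting on $x_i$ gives pieces of densities $\alpha\pm t$ in the $(n-1)$-cube, with $t=\widehat f(\{i\})$, and exactly $W_1[f]=t^2+\frac14\|\widehat f_0+\widehat f_1\|^2$. The induction hypothesis only gives the scalar bounds $W_1[f_j]\le w(\alpha\pm t)$, so Cauchy--Schwarz is all you have for the cross term, and the inductive step needs
\[
t^2+\tfrac14\Bigl(\sqrt{w(\alpha+t)}+\sqrt{w(\alpha-t)}\Bigr)^2\le w(\alpha).
\]
This fails in two places.
\begin{itemize}
\item \emph{Near your ``extremal'' split.} Take $\alpha=\frac18$ and $t=-\frac{63}{512}$, so the pieces have densities $\frac1{512}$ and $\frac{127}{512}$. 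Every valid $w$ has $w(\frac1{512})\ge 9\cdot 512^{-2}$ (a subcube). It also has $w(\frac{127}{512})\ge 2(\frac{127}{512})^2$ (a codimension-$2$ subcube minus a small subcube). So the left side is at least about $0.04694>\frac3{64}$. The point is that a tiny piece has normalized level-$1$ weight of order $\log$ of its inverse density.
\item \emph{Nearly balanced splits.} With $\sigma=\sqrt w$, the left side equals $w(\alpha)+(1+\sigma\sigma'')t^2+O(t^4)$, so you need $\sigma\sigma''\le -1$. Your interpolation is $w(\alpha)=\alpha^2+\frac\alpha4$ on $[\frac18,\frac14]$, i.e.\ $\beta\ge\frac n2-\frac12-\frac1{8\alpha}$. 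For it, $1+\sigma\sigma''=1-\frac{1}{16(4\alpha^2+\alpha)}>0$, e.g.\ $\frac{17}{21}$ at $\alpha=\frac3{16}$. A generic random set of density $\frac3{16}$ has all level-$1$ coefficients tiny but nonzero. So every split available to it is nearly balanced and fails, and averaging over coordinates does not help.
\end{itemize}
Hence the auxiliary statement you propose cannot be propagated by this recursion. A repair would need a genuinely different, much more concave bound function, plus structural input for the unbalanced splits (e.g.\ that the two pieces' level-$1$ vectors cannot be aligned); neither is supplied. The equality analysis ``by tracing the recursion'' falls with it.

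Your Step~1 was in fact the right road, abandoned too early. The duality $\langle f,g\rangle\le\mathbb E[(g-\tau)_+]+\tau\alpha$ is sharp here. Chang's bound loses only because it uses subgaussian tails that ignore the Boolean constraint $|\widehat f(\{i\})|\le\alpha$. The paper normalizes $a_i=\sqrt3\,\widehat f(\{i\})/\sqrt{W_1}$. If $W_1\ge\frac3{64}$, that constraint gives $|a_i|\le1$, and with $X=\sum_i a_ix_i$,
\[
\sqrt{3W_1}=\mathbb E[fX]\le\mathbb E[(X-2)_+]+\tfrac14=\tfrac12\mathbb E(|X|-2)_++\tfrac14.
\]
The missing key lemma is the sharp Rademacher estimate $\mathbb E(|X|-2)_+\le\frac14$ for $|a_i|\le1$ and $\sum_i a_i^2=3$. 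It is proved with the trigonometric majorant $\Phi(u)=u^2-1+\frac4\pi\cos\frac{\pi u}2-\frac2{3\pi}\cos^2\frac{\pi u}2\ge 8(|u|-2)_+$. Its expectation is explicit via $\mathbb E\cos\frac{\pi X}2=\prod_i\cos\theta_i$ and $\mathbb E\cos\pi X=\prod_i\cos2\theta_i$, where $\theta_i=\frac{\pi a_i}{2}$, combined with $12\prod_i\cos\theta_i\le 1+\prod_i\cos 2\theta_i$. This gives $W_1\le\frac3{64}$ directly. Equality forces exactly three $|a_i|=1$, hence a subcube, and neither higher Fourier levels nor induction on $n$ is needed.
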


\section{The proof}
Let $f=1_{\mathcal{F}}$ be the indicator Boolean function of $\mathcal{F}$, then $\mathbb{E}[f]=\frac{|\mathcal{F}|}{2^n}=\frac{1}{8}$. Our next lemma is a standard application of Fourier analysis for Boolean functions, and shows that in order to lower bound the average pairwise distance in $\mathcal{F}$, it suffices to upper bound the level-$1$ Fourier weight $W_1[f]=\sum_{i=1}^n \widehat{f}(\{i\})^2$ of its indicator function. A more general version can be found in \cite{XF98}. To make our note self-contained, we include the proof for our special case below. We use the following Fourier convention throughout our paper.
$$\widehat{f}(S)=\frac{1}{2^n} \sum_{x \in\{0,1\}^n} f(x)(-1)^{\sum_{i \in S}x_i}.$$

\begin{lemma}\label{lem_reduction}
For $\mathcal{F} \subset \{0,1\}^n$ with $|\mathcal{F}|=2^{n-3}$, we have
$$\mathbb E_{X,Y\sim\mathrm{Unif}(\mathcal F)}~d_H(X,Y)
=\frac n2-32\sum_{i=1}^n\widehat f(\{i\})^2.
$$
\end{lemma}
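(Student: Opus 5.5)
We decompose the Hamming distance coordinatewise and express each coordinate's contribution through a single Fourier coefficient. For $x,y\in\{0,1\}^n$ we have $\mathbf{1}[x_i\neq y_i]=\frac{1-(-1)^{x_i}(-1)^{y_i}}{2}$. Summing over $i$ gives
$$d_H(x,y)=\frac n2-\frac12\sum_{i=1}^n(-1)^{x_i}(-1)^{y_i}.$$
We then average over independent $X,Y$ uniform on $\mathcal F$. By independence, the expectation of each product factors as $\left(\mathbb E_{X\sim \mathrm{Unif}(\mathcal F)}(-1)^{X_i}\right)^2$.

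Next we express $\mathbb E_{X\sim\mathrm{Unif}(\mathcal F)}(-1)^{X_i}$ through $\widehat f(\{i\})$. By the Fourier convention fixed above,
$$\widehat f(\{i\})=\frac1{2^n}\sum_{x\in\mathcal F}(-1)^{x_i}=\frac{|\mathcal F|}{2^n}\,\mathbb E_{X}(-1)^{X_i}=\frac18\,\mathbb E_X(-1)^{X_i}.$$
Hence $\mathbb E_X(-1)^{X_i}=8\widehat f(\{i\})$, and its square is $64\widehat f(\{i\})^2$.

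Substituting into the averaged identity yields
$$\mathbb E\, d_H(X,Y)=\frac n2-\frac12\sum_{i=1}^n 64\,\widehat f(\{i\})^2=\frac n2-32\sum_{i=1}^n\widehat f(\{i\})^2,$$
which is the claim.

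There is no real obstacle here. The only points needing care are the normalization factor $2^n/|\mathcal F|=8$ and the independence step, which lets the cross expectation factor into a square. Note that this works even though $X$ and $Y$ may coincide with positive probability, since they are drawn independently.
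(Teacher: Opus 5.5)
Your proof is correct and follows the paper's argument essentially line for line. Both use the coordinatewise identity $1_{\{x_i\neq y_i\}}=\frac{1-(-1)^{x_i+y_i}}{2}$, factor the cross term by the independence of $X$ and $Y$, and use the normalization $\mathbb{E}[(-1)^{X_i}]=8\widehat f(\{i\})$ coming from $|\mathcal F|/2^n=\frac18$.
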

\begin{proof}
For every $i \in [n]$, $$\widehat{f}(\{i\})=\frac{1}{2^n}\sum_{x \in \mathcal{F}}(-1)^{x_i}=\frac{|\mathcal{F}|}{2^n}\cdot \mathbb{E}[(-1)^{X_i}].$$
Using $|\mathcal{F}|=2^{n-3}$, we have
$$\mathbb{E}[(-1)^{X_i}]=8\widehat{f}(\{i\}).$$

Note that each $X_i, Y_i \in \{0,1\}$, so 
$$1_{\{X_i \neq Y_i\}}=\frac{1-(-1)^{X_i+Y_i}}{2}.$$
Therefore using the independence of $X$ and $Y$, we have
\begin{align*}
\mathbb E_{X,Y\sim\mathrm{Unif}(\mathcal F)}~d_H(X,Y)&=\sum_{i=1}^n \mathbb{E}[1_{\{X_i \neq Y_i\}}]=\sum_{i=1}^n \mathbb{E}\left[\frac{1-(-1)^{X_i+Y_i}}{2}\right]\\
&=\frac{n}{2}-\frac{1}{2} \sum_{i=1}^n \mathbb{E}\left[(-1)^{X_i}\right] \mathbb{E}\left[(-1)^{Y_i}\right]\\
&=\frac{n}{2}-\frac{1}{2}\sum_{i=1}^n \left(8\widehat{f}(\{i\})\right)^2\\
&=\frac{n}{2}-32\sum_{i=1}^n\widehat f(\{i\})^2.
\end{align*}
\end{proof}
One key estimate we use in our proof is the following inequality involving the cosine functions.
\begin{lemma}\label{lem_cos_easy}
Suppose $\theta_i \in [0, \frac{\pi}{2}]$ satisfy $\sum_i \theta_i^2=\frac{3\pi^2}{4}$, then 
$$12 \prod_i \cos \theta_i \le 1 + \prod_i \cos(2\theta_i).$$
Equality holds exactly when three $\theta_i$ are $\frac{\pi}{2}$ and the rest are $0$.
\end{lemma}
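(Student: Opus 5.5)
The plan is to set $P=\prod_i\cos\theta_i$ and $Q=\prod_i\cos(2\theta_i)$ and prove $12P\le 1+Q$ by case analysis on the sign structure of $Q$. Two elementary one-variable estimates will do most of the work:
$$\cos t\le e^{-t^2/2}\ (0\le t\le \tfrac{\pi}{2}),\qquad |\cos 2t|\le e^{-2t^2}\ (0\le t\le\tfrac{\pi}{4}),\qquad |\cos 2t|=\cos(\pi-2t)\le e^{-(\pi-2t)^2/2}\ (\tfrac{\pi}{4}\le t\le \tfrac{\pi}{2}).$$
The first estimate and $\sum\theta_i^2=\frac{3\pi^2}{4}$ give the global bound $P\le e^{-3\pi^2/8}<0.025$, so $12P<0.3$. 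It therefore suffices to show $Q\ge -0.7$, except in the near-extremal region.

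First I dispose of the boundary case. If some $\theta_i=\frac{\pi}{2}$, then $P=0\le 1+Q$, since $|Q|\le 1$. Equality forces $Q=-1$, so every $\cos 2\theta_i=\pm1$, that is, every $\theta_i\in\{0,\frac{\pi}{2}\}$. The constraint $\sum\theta_i^2=\frac{3\pi^2}{4}$ then forces exactly three of them to equal $\frac{\pi}{2}$. This gives the stated equality case, and one checks that it does attain equality ($0=1-1$).

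Next, let $m$ be the number of indices with $\theta_i>\frac{\pi}{4}$; these are the only indices with $\cos2\theta_i<0$, so $\operatorname{sign}Q=(-1)^m$. If $m$ is even, then $Q\ge0>12P-1$, and the inequality is strict. If $m$ is odd, write $L$ for the set of large indices and $\varepsilon_j=\frac{\pi}{2}-\theta_j$ for $j\in L$. The exponential bounds give
$$|Q|\le \exp\Big(-2\sum_{i\notin L}\theta_i^2-2\sum_{j\in L}\varepsilon_j^2\Big),\qquad P\le \prod_{j\in L}\sin\varepsilon_j\cdot \exp\Big(-\tfrac12\sum_{i\notin L}\theta_i^2\Big).$$
When $m=1$, the small coordinates carry $\sum_{i\notin L}\theta_i^2\ge \frac{3\pi^2}{4}-\frac{\pi^2}{4}=\frac{\pi^2}{2}$, so $|Q|\le e^{-\pi^2}$ and we are done. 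When $m\ge5$, I would bound $P\le\prod_{j\in L}\sin\varepsilon_j\le \min_j\varepsilon_j$ and compare it with $1-|Q|$ as in the $m=3$ case below. Note that $m\le 12$ always holds, and $\sum_{j\in L}\theta_j^2\le\frac{3\pi^2}{4}$ bounds how small the $\varepsilon_j$ can all be simultaneously: since $\sum_{j\in L}(\frac{\pi}{2}-\varepsilon_j)^2\le \frac{3\pi^2}{4}$, having $m\ge5$ forces $\sum_j\varepsilon_j$ to be bounded below by a constant. In that regime $1-|Q|$ is bounded away from $0$ while $12P$ is small.

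The main obstacle is $m=3$ near the extremal point, where both sides tend to $0$ and crude constants no longer suffice. Set $s=\sum_{i\notin L}\theta_i^2+\sum_{j\in L}\varepsilon_j^2$. Then $1+Q\ge 1-e^{-2s}$, while $12P\le 12\varepsilon_1\varepsilon_2\varepsilon_3$. By AM--GM, $\varepsilon_1\varepsilon_2\varepsilon_3\le (s/3)^{3/2}$, so it suffices that
$$12\,(s/3)^{3/2}\le 1-e^{-2s}.$$
This holds for small $s$ because the left side is $O(s^{3/2})$ and the right side is $\approx 2s$. For $s$ beyond a modest threshold (around $s\approx 0.6$, where $1-e^{-2s}\ge 0.7$), the global bound $12P<0.3$ takes over instead. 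The remaining work is a one-variable numerical check that these two regimes overlap, and that the inequality is strict whenever $s>0$, which yields the uniqueness of the equality case.
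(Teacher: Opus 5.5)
Your $P=0$ case, the global bound $12P<0.3$, the even-$m$ case and the $m=1$ exclusion all match the paper. The last paragraph, however, contains a numerical slip that makes the promised overlap check fail as written.

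\textbf{The threshold slip.} Since $12P<0.3$, the large-$s$ regime only needs $1+Q\ge 0.3$, i.e.\ $e^{-2s}\le 0.7$. This is exactly your own earlier reduction $Q\ge -0.7$, and it holds for $s\ge \tfrac12\ln\tfrac{10}{7}\approx 0.18$. You wrote $1-e^{-2s}\ge 0.7$ instead, which gives $s\gtrsim 0.6$. With that threshold the two regimes do not overlap: $12(s/3)^{3/2}\le 1-e^{-2s}$ holds only for $s\lesssim 0.37$ (at $s=0.4$ the two sides are about $0.584$ and $0.551$). With the correct threshold the regimes overlap on roughly $[0.18,0.37]$. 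The local inequality is strict on $(0,0.2]$, so the $m=3$ case goes through, including strictness for $s>0$ and hence the equality characterization.

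\textbf{The $m\ge5$ case.} Your separate treatment of $m\ge 5$ is only qualitative as written, and it is unnecessary. The $m=3$ argument applies verbatim to every odd $m\ge3$: for any three large indices, $12P\le 12\varepsilon_{j_1}\varepsilon_{j_2}\varepsilon_{j_3}$, and $\varepsilon_{j_1}^2+\varepsilon_{j_2}^2+\varepsilon_{j_3}^2\le s$. If you prefer your own route, it can be made quantitative. From $\sum_{j\in L}(\frac{\pi}{2}-\varepsilon_j)^2\le\frac{3\pi^2}{4}$ you get $\sum_{j\in L}\varepsilon_j\ge (m-3)\pi/4$. Cauchy--Schwarz then gives $s\ge (m-3)^2\pi^2/(16m)\ge \pi^2/20\approx 0.49$, which lies in the global regime.

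\textbf{Comparison with the paper.} With these fixes your proof is correct, and it differs from the paper's in the case of three or more large angles.
\begin{itemize}
\item The paper uses the cruder threshold $\frac12$, so it only has to treat $Q<-\frac12$.
\item It discards all but three large angles via $P\le\prod_{i\le 3}\cos\theta_i$ and $1+Q\ge 1+\prod_{i\le3}\cos 2\theta_i$.
\item It applies AM--GM to the numbers $1+\cos2\theta_i$, obtaining $P/(1+Q)\le \sqrt{1-q}\,/\,\bigl(2\sqrt2\,(1+q+q^2)\bigr)$ with $q=\bigl(\prod_{i\le3}(-\cos2\theta_i)\bigr)^{1/3}$.
\item The case hypothesis forces $q>2^{-1/3}$, and the ratio bound $<\frac1{12}$ follows directly.
\end{itemize}
So the paper needs no exponential estimate for the large angles and no matching of two regimes, and it handles all odd $m\ge 3$ at once.

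Your route instead measures the squared distance $s$ from the extremal configuration and plays $12P=O(s^{3/2})$ against $1+Q\ge 1-e^{-2s}\approx 2s$. This makes it transparent why the equality point is isolated, at the price of the one-variable numerical check that the local and global regimes overlap — which is exactly where your slip occurred.
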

\begin{proof}
If the left hand side is non-positive, then the inequality automatically holds. If equality holds in this case, then $\prod_i \cos(2 \theta_i)=-1$, which means that every $\theta_i$ is either $0$ or $\frac{\pi}{2}$. Thus exactly three of them are $\frac{\pi}{2}$ and the rest are $0$.

Now suppose the left hand side is strictly positive. Using $\cos u \le e^{-u^2/2}$ for $u \in [0, \frac{\pi}{2}]$, we have 
$$12 \prod_i \cos \theta_i \le 12 e^{-\frac{\sum_i \theta_i^2}{2}}=12 e^{-3\pi^2/8}\approx 0.296\cdots <\frac{1}{2}.$$
So if $\prod_i \cos(2\theta_i) \ge -\frac{1}{2}$, then the strict inequality is also true.   

It remains to consider the case $\prod_i \cos(2\theta_i)<-\frac{1}{2}$. By the negativity of this product, there is an odd number of indices $i$ such that $\theta_i>\frac{\pi}{4}$. Suppose there is only one such index, then the sum of squares of the other $\theta_i$'s is at least $\frac{\pi^2}{2}$. The same $\cos u \le e^{-u^2/2}$ inequality gives
$$\prod_i \cos(2\theta_i) \ge -\prod_{i: \theta_i < \frac{\pi}{4}} \cos(2\theta_i) \ge - \exp\left(-2\sum_{i: \theta_i<\frac{\pi}{4}}\theta_i^2\right)\ge -e^{-\pi^2}>-\frac{1}{2},$$
which is impossible. Thus there are at least three $\theta_i$'s that are strictly greater than $\frac{\pi}{4}$. Without loss of generality, we assume $\theta_1, \theta_2, \theta_3 \in (\frac{\pi}{4}, \frac{\pi}{2}]$. They satisfy $\cos(2\theta_i) \in [-1, 0)$. By the AM-GM inequality,
$$\left(\prod_{i=1}^3 (1+\cos(2 \theta_i))\right)^{1/3} \le 1-\frac{\sum_{i=1}^3 (-\cos(2\theta_i))}{3}\le 1-\prod_{i=1}^3 (-\cos(2\theta_i))^{1/3}.$$
Using $\cos(\theta_i)=\sqrt{(1+\cos(2\theta_i))/2}$ and writing $q=\prod_{i=1}^3(-\cos(2\theta_i))^{1/3}$, we have
\begin{align*}
\frac{\prod_i \cos(\theta_i)}{1+\prod_i \cos(2\theta_i)} &\le \frac{\prod_{i=1}^3 \cos(\theta_i)}{1+\prod_{i=1}^3 \cos(2\theta_i)}\le \frac{(1-\prod_{i=1}^3(-\cos(2\theta_i))^{1/3})^{3/2}}{2\sqrt{2}(1-\prod_{i=1}^3(-\cos(2\theta_i)))}\\
&=\frac{(1-q)^{3/2}}{2\sqrt{2}(1-q^3)}=\frac{\sqrt{1-q}}{2\sqrt{2}(1+q+q^2)}. 
\end{align*}
Since we have assumed that $\prod_i \cos(2\theta_i)<-\frac{1}{2}$, this also gives $\prod_{i=1}^3 \cos(2\theta_i)<-\frac{1}{2}$. So $q>(1/2)^{1/3}>3/4$. Thus $\sqrt{1-q}<1/2$, and $1+q+q^2>1+(3/4)+(3/4)^2>9/4$. Plugged into the inequality above, we have
$$\frac{\prod_i \cos(\theta_i)}{1+\prod_i \cos(2\theta_i)} \le \frac{\frac{1}{2}}{2\sqrt{2}\cdot \frac{9}{4}}=\frac{1}{9\sqrt{2}}<\frac{1}{12},$$
completing the proof of the last case.
\end{proof}

\begin{lemma}\label{lem_cos_hard}
For a real number $u$, write $u_+=\max\{u, 0\}$.
Define $$\Phi(u)=u^2-1+\frac{4}{\pi}\cos\frac{\pi u}{2}-\frac{2}{3\pi}\cos^2 \frac{\pi u}{2},$$
then for every $u \in \mathbb{R}$,
$$\Phi(u) \ge 8(|u|-2)_+.$$
Equality occurs exactly when $|u| \in \{1,3,5\}.$
\end{lemma}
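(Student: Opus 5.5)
Since $\Phi$ is even, it suffices to treat $u\ge 0$. Write $c=\cos\frac{\pi u}{2}\in[-1,1]$, so that
$$\Phi(u)=u^2-1+\frac{4}{\pi}c-\frac{2}{3\pi}c^2.$$
The map $c\mapsto \frac4\pi c-\frac{2}{3\pi}c^2$ is increasing on $[-1,1]$, since its derivative $\frac{4}{\pi}-\frac{4}{3\pi}c$ is positive there. Hence its minimum is $-\frac{14}{3\pi}$, attained at $c=-1$, and
$$\Phi(u)\ge u^2-1-\frac{14}{3\pi}.$$
I will split into three ranges.

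\textbf{Large $u$.} The target is $u^2-1-\frac{14}{3\pi}\ge 8(u-2)$, i.e.\ $(u-4)^2\ge 1+\frac{14}{3\pi}\approx 2.485$. This holds for $u\ge 4+1.577\approx 5.58$. For $u\ge 6$ the inequality is then strict, and no equality is possible there.

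\textbf{The range $u\in[0,2]$.} Here I need $\Phi\ge 0$, with equality only at $u=1$. Substitute $u=1+t$ with $t\in[-1,1]$, so $c=-\sin\frac{\pi t}{2}$. Then
$$\Phi=2t+t^2-\frac4\pi\sin\frac{\pi t}{2}-\frac{2}{3\pi}\sin^2\frac{\pi t}{2}.$$
Put $s=\sin\frac{\pi t}{2}$. I will use $|s|\le \frac{\pi}{2}|t|$ together with the Taylor bounds
$$\frac{\pi t}{2}-\frac{(\pi t/2)^3}{6}\le s\le \frac{\pi t}{2}\quad (t\ge0),$$
and the symmetric versions for $t<0$. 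The linear terms cancel: $2t-\frac4\pi\cdot\frac{\pi t}{2}=0$. The remaining terms are at least
$$t^2-\frac{2}{3\pi}\cdot\frac{\pi^2t^2}{4}=t^2\Bigl(1-\frac{\pi}{6}\Bigr)>0$$
for $t>0$. For $t<0$, the term $-\frac4\pi(s-\frac{\pi t}{2})$ is also nonnegative, so the same bound works. Thus $\Phi>0$ for $t\ne0$.

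\textbf{Middle range $u\in[2,6]$.} Set $g(u)=\Phi(u)-8(u-2)=(u-4)^2-9+h(c)$, where $h(c)=\frac4\pi c-\frac{2}{3\pi}c^2$. By the symmetry $u\mapsto 8-u$, which preserves both $(u-4)^2$ and $c$, it suffices to take $u\in[2,4]$. Write $u=3+t$ with $t\in[-1,1]$. Then $c=\sin\frac{\pi t}{2}$ and
$$g=(t-1)^2-9+\frac4\pi s-\frac{2}{3\pi}s^2+8\ \ (\text{after recomputing}),$$
which simplifies to
$$g=t^2-2t+\frac4\pi s-\frac{2}{3\pi}s^2.$$
At $t=0$ this vanishes, giving equality at $u=3$. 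The linear terms cancel again, since $\frac4\pi\cdot\frac{\pi}{2}t=2t$. So
$$g=t^2+\frac4\pi\Bigl(s-\frac{\pi t}{2}\Bigr)-\frac{2}{3\pi}s^2.$$
The middle term has the sign of $-t$ and magnitude at most $\frac{4}{\pi}\cdot\frac{(\pi |t|/2)^3}{6}=\frac{\pi^2|t|^3}{12}$. This gives
$$g\ge t^2\Bigl(1-\frac{\pi}{6}-\frac{\pi^2|t|}{12}\Bigr).$$
That bound is positive only for $|t|<0.58$ or so. For larger $|t|$ (roughly $u\in[2,2.42]$ and $u\in[3.58,4]$) I expect to need a direct check.
\begin{itemize}
\item Near $u=4$: $c\approx 1$ and $(u-4)^2-9+h\approx -9+\frac{3.33}{\pi}<0$. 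Checking this case is crucial, because it means something is wrong. Indeed, at $u=4$ we get $\Phi=15+\frac4\pi-\frac{2}{3\pi}\approx16.06$ against $8\cdot 2=16$, so $g\approx 0.06>0$ and this point is fine. The expression $g=(u-4)^2-9+h$ was miscomputed; the correct form is $g=u^2-8u+15+h(c)=(u-3)(u-5)+h(c)$. I will redo the middle range with this form, centering at $u=3$ and at $u=5$ (the two zeros), and $u=4$ is the hardest interior point, where the margin is only $\frac4\pi-\frac{2}{3\pi}-1\approx 0.06$.
\end{itemize}

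The corrected middle argument runs as follows. On $[2,4]$, write $u=3+t$, so $(u-3)(u-5)=t^2-2t$ and $c=\sin\frac{\pi t}{2}$. This is exactly the expression above. For $|t|\le 0.5$ the Taylor estimate gives positivity except at $t=0$. For $t\in[0.5,1]$ (the region near $u=4$) and $t\in[-1,-0.5]$ (near $u=2$) I will use monotonicity or interval estimates, for instance bounding $s$ on subintervals of width $0.1$, since the margins are small (about $0.06$ at $u=4$). The interval $[4,6]$ follows from the symmetry $u\mapsto 8-u$, which fixes $c$ and $(u-3)(u-5)$, and gives the equality point $u=5$.

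\textbf{Main obstacle.} The main difficulty is the thin margin near $u=4$ and near $u=2$, where $\Phi=\frac4\pi-\frac2{3\pi}+3\approx3.06$. It must be handled by careful numerical or piecewise bounds rather than a single Taylor estimate.
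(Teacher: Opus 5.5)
There is a genuine gap, and it sits exactly at the delicate part of the lemma. In the range $u\in[0,2]$ with $t=u-1<0$ you assert that $-\frac4\pi\bigl(s-\frac{\pi t}{2}\bigr)\ge 0$. Your own ``symmetric'' Taylor bounds say $\frac{\pi t}{2}\le s$ for $t<0$, since $\sin x\ge x$ for $x\le 0$, so this term is in fact $\le 0$. The conclusion $\Phi\ge t^2(1-\frac{\pi}{6})$ is false there: at $u=0$ it would give $\approx 0.476$, whereas $\Phi(0)=\frac4\pi-\frac{2}{3\pi}-1\approx 0.061$. With the correct sign you only get $\Phi\ge t^2\bigl(1-\frac{\pi}{6}-\frac{\pi^2|t|}{12}\bigr)$. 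That leaves $u\in[0,0.42]$ unproved, and this is precisely the thin-margin region, where $\Phi$ is only about $0.05$--$0.06$.

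The same hole appears in your middle range. There $u\in[3.5,4]$ (and $[4,4.5]$) is left as an interval check that is never carried out. The width-$0.1$ grid you suggest also fails with naive endpoint bounds: on $[3.6,3.7]$ it yields $-0.91+h(\cos(0.2\pi))\approx-0.019$. By contrast, the neighbourhood of $u=2$ is harmless. For $t<0$ in the middle range the cubic term is nonnegative, so $g\ge t^2(1-\frac{\pi}{6})$ directly. Also $\Phi(2)=3-\frac4\pi-\frac{2}{3\pi}\approx 1.51$, not $3.06$.

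These two holes are really one. Because $\cos\frac{\pi u}{2}$ has period $4$, we have $\Phi(u)-\Phi(u-4)=8(u-2)$. So your $g(u)=\Phi(u)-8(u-2)$ is just $\Phi(u-4)$, i.e.\ $g(3+t)=\Phi(1-t)$. The paper uses this identity to reduce the whole lemma to showing $\Phi\ge 0$ on $[0,\infty)$ with equality only at $u=1$, which makes your middle and large-$u$ ranges unnecessary. Your $t\ge 0$ argument is the paper's, and it works verbatim for all $u\ge 1$, since $\sin x\le x$ and $\sin^2x\le x^2$ for every $x\ge0$.

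What is missing is an argument for $\Phi>0$ on $[0,1)$. The paper gets it from the shape of $\Phi$. Write $c=\cos\frac{\pi u}{2}$; then $\Phi''=2-\frac{\pi}{3}-\pi c+\frac{2\pi}{3}c^2$ is a quadratic in $c$ with exactly one root in $[0,1]$. It is negative at $u=0$ and positive at $u=1$, so $\Phi'$ decreases and then increases on $[0,1]$. Combined with $\Phi'(0)=\Phi'(1)=0$, this forces $\Phi'<0$ on $(0,1)$, hence $\Phi(u)>\Phi(1)=0$ there. Your proof, including its equality characterization, is not complete without some such argument (or a genuinely executed, much finer numerical verification).
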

\begin{proof}
First note that $\Phi(u)$ and $8(|u|-2)_+$ are both even functions. So we just need to prove the inequality $\Phi(u) \ge 8(u-2)_+$ for $u \ge 0$ and show that equality is attained only at $u=1,3,5$.

Secondly, the periodicity of the cosine function gives that for every $u$,
$$\Phi(u)-\Phi(u-4)=8(u-2).$$
For $u \in [0,2]$, the inequality $\Phi(u) \ge 8(u-2)_+$ just becomes $\Phi(u) \ge 0$. For $u \ge 2$, it suffices to show $\Phi(u-4) \ge 0$. So if we are able to establish that $\Phi(u) \ge 0$ for all $u \ge 0$ and only attains $0$  at $u=1$ in the interval $[0, \infty)$, then by the fact that $\Phi$ is even, we immediately complete the proof and the characterization of the equality case.

We do standard calculus for the last step. By calculation $\Phi(1)=\Phi'(0)=\Phi'(1)=0$. We discuss two cases. First for $u \in [0,1]$, direct differentiation gives
$$\Phi''(u)=2-\pi \cos(\pi u/2)+\frac{2\pi}{3}\cos^2(\pi u/2)-\pi/3,$$
which is a quadratic function in $\cos(\pi u/2) \in [0,1]$. At the endpoints, $\Phi''(0)=2-2\pi/3<0$ and $\Phi''(1)=2-\pi/3>0$.
%At the two ends $\Phi''$ are $2-\pi/3>0$ and $2-2\pi/3<0$. 
So in $[0,1]$, $\Phi'(u)$ first decreases and then increases. Using $\Phi'(0)=\Phi'(1)=0$, we have $\Phi'(u) < 0$ on $(0,1)$. Therefore $\Phi(u)>\Phi(1)=0$ for $u \in [0, 1)$.

For $u \ge 1$, we substitute $v=\pi(u-1)/2 \ge 0$, then 
$$\frac{\pi}{4}\Phi(u)=v-\sin v+\frac{v^2}{\pi}-\frac{\sin^2 v}{6} \ge \frac{v^2}{\pi}-\frac{\sin^2 v}{6} \ge 0.$$
For both  inequalities, we use $v \ge |\sin v|$.
\end{proof}

\begin{lemma}\label{lem_sign_sum}
Let $\varepsilon_1, \cdots, \varepsilon_n$ be independent uniform signs from $\{-1, 1\}$. Suppose $|a_i| \le 1$, and $\sum_{i=1}^n a_i^2=3$. Then
$$\mathbb{E}\left(\left|\sum_{i=1}^n \varepsilon_i a_i\right|-2\right)_{+} \le \frac{1}{4}.$$
Equality holds if and only if three of the $a_i$'s are $\pm 1$, and the rest are $0$.
\end{lemma}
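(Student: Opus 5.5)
The plan is to bound the function $(|u|-2)_+$ from above by the trigonometric-polynomial majorant $\Phi$ of Lemma~\ref{lem_cos_hard}. We then compute $\mathbb{E}\,\Phi(S)$ exactly for $S=\sum_i \varepsilon_i a_i$, using independence of the signs, and this reduces everything to the cosine inequality of Lemma~\ref{lem_cos_easy}.

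\textbf{Step 1: the majorant.} By Lemma~\ref{lem_cos_hard}, $8(|S|-2)_+\le \Phi(S)$ pointwise, so
$$\mathbb{E}(|S|-2)_+\le \tfrac18\,\mathbb{E}\,\Phi(S).$$

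\textbf{Step 2: compute $\mathbb{E}\,\Phi(S)$ term by term.}
\begin{itemize}
\item Since the signs are independent, $\mathbb{E}S^2=\sum_i a_i^2=3$, so $\mathbb{E}(S^2-1)=2$.
\item For any real $t$, independence gives $\mathbb{E}\cos(tS)=\mathrm{Re}\,\mathbb{E}e^{itS}=\prod_i \cos(t a_i)$.
\item Taking $t=\pi/2$ yields $\mathbb{E}\cos\frac{\pi S}{2}=\prod_i\cos\frac{\pi a_i}{2}$.
\item Writing $\cos^2\frac{\pi S}{2}=\frac{1+\cos(\pi S)}{2}$ and taking $t=\pi$ yields $\mathbb{E}\cos^2\frac{\pi S}{2}=\frac12\bigl(1+\prod_i\cos(\pi a_i)\bigr)$.
\end{itemize}
Set $\theta_i=\frac{\pi|a_i|}{2}\in[0,\frac{\pi}{2}]$; this uses $|a_i|\le 1$, and cosine is even. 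Then $\sum_i\theta_i^2=\frac{3\pi^2}{4}$, and
$$\mathbb{E}\,\Phi(S)=2+\frac{4}{\pi}\prod_i\cos\theta_i-\frac{1}{3\pi}\Bigl(1+\prod_i\cos 2\theta_i\Bigr).$$

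\textbf{Step 3: apply Lemma~\ref{lem_cos_easy}.} The claim $\mathbb{E}\,\Phi(S)\le 2$ is equivalent to $12\prod_i\cos\theta_i\le 1+\prod_i\cos 2\theta_i$. This is exactly Lemma~\ref{lem_cos_easy}, whose hypotheses match. Hence $\mathbb{E}(|S|-2)_+\le \frac{2}{8}=\frac14$.

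\textbf{Step 4: equality.}
\begin{itemize}
\item Equality in the bound forces equality in Lemma~\ref{lem_cos_easy}. So three $\theta_i$ equal $\frac{\pi}{2}$ and the rest are $0$, i.e. three $a_i$ are $\pm1$ and the rest are $0$.
\item Conversely, in that case $S$ is a sum of three independent uniform signs. Then $|S|\in\{1,3\}$, with $|S|=3$ with probability $\frac14$, so $\mathbb{E}(|S|-2)_+=\frac14$. This is consistent with the equality set $\{1,3,5\}$ of Lemma~\ref{lem_cos_hard}.
\end{itemize}

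All the genuine difficulty has already been packaged into Lemmas~\ref{lem_cos_easy} and~\ref{lem_cos_hard}. The only points needing care here are the characteristic-function computation of $\mathbb{E}\cos^2(\pi S/2)$ and checking that the normalization $\theta_i=\pi|a_i|/2$ lands exactly in the hypotheses of Lemma~\ref{lem_cos_easy}.
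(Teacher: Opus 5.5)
Your proposal is correct and follows the paper's proof essentially verbatim: bound $8(|S|-2)_+$ above by $\Phi(S)$ via Lemma~\ref{lem_cos_hard}, evaluate $\mathbb{E}\,\Phi(S)$ through $\mathbb{E}\cos(tS)=\prod_i\cos(ta_i)$ at $t=\pi/2$ and $t=\pi$, and reduce both the bound and the equality case to Lemma~\ref{lem_cos_easy}. Your explicit check of the converse (three $a_i=\pm1$ and the rest $0$ give $\mathbb{E}(|S|-2)_+=\frac14$) is a small addition that the paper leaves implicit.
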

\begin{proof}
Without loss of generality we can assume all $a_i \in [0,1]$ by replacing $a_i$ by $|a_i|$.
We let $X=\sum_{i=1}^n \varepsilon_i a_i$ and $\theta_i=\frac{\pi a_i}{2} \in [0, \frac{\pi}{2}]$. Then 
$$\mathbb{E}[X^2]=\sum_{i=1}^n a_i^2 = 3,\qquad \sum_{i=1}^n \theta_i^2=\frac{\pi^2}{4}\sum_{i=1}^n a_i^2=\frac{3\pi^2}{4}.$$
By the independence of $\varepsilon_i$'s, we have
$$\mathbb{E}[\cos(\pi X)]=\textup{Re}\left[\mathbb{E}[e^{i\pi X}]\right]=\textup{Re}\left[\prod_{i=1}^n \mathbb{E}[e^{i\pi \varepsilon_i a_i}]\right]=\prod_{i=1}^n \cos(2\theta_i).$$
Similarly $$\mathbb{E}\left[\cos\left(\frac{\pi X}{2}\right)\right]=\prod_{i=1}^n \cos(\theta_i).$$
Applying Lemma \ref{lem_cos_hard} with $u=X$ and taking expectations, we have
\begin{align*}
8\mathbb{E}[(|X|-2)_+] &\le \mathbb{E}[\Phi(X)]=\mathbb{E}[X^2]-1+\frac{4}{\pi}\mathbb{E}\left[\cos \frac{\pi X}{2}\right]-\frac{1}{3\pi}\mathbb{E}[1+\cos \pi X]\\
&=2+\frac{1}{3\pi}\left(12 \prod_{i=1}^n \cos(\theta_i)-1-\prod_{i=1}^n \cos(2\theta_i)\right) \le 2.
\end{align*}
Dividing both sides by $8$ concludes the proof of the inequality. When equality is attained, we need equality in Lemma \ref{lem_cos_easy}, which gives three $\theta_i=\frac{\pi}{2}$ and the rest equal $0$. Thus exactly three $a_i$'s have absolute value $1$, and all the others vanish. 
\end{proof}

The celebrated level-$1$ bound, often called Chang's inequality \cite{C02}, states that $W_1[f] \le 2\mathbb{E}[f]^2 \ln \frac{1}{\mathbb{E}[f]}$. This specific form can be found in \cite{IMR14}. For our case $\mathbb{E}[f]=\frac{1}{8}$, this gives $W_1[f] \le \frac{3 \ln 2}{32} \approx 0.0649825$.
This is not enough for establishing the optimality of the $(n-3)$-dimensional subcube. We prove the following stronger bound.
\begin{lemma}\label{lem_main}
For $f=1_{\mathcal{F}}$ with $\mathbb{E}[f]=\frac{1}{8}$, we have 
$$W_1[f]=\sum_{i=1}^n \widehat{f}(\{i\})^2 \le \frac{3}{64}=0.046875.$$
Furthermore when equality holds, $\mathcal{F}$ is a $(n-3)$-dimensional subcube of $\{0,1\}^n$.
\end{lemma}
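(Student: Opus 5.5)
We prove Lemma \ref{lem_main} by the standard duality argument behind level-$1$ inequalities, with Lemma \ref{lem_sign_sum} supplying the sharp estimate. If $W_1[f]=0$ there is nothing to prove, so assume $W:=W_1[f]>0$. Work in the $\pm1$ language: for $x\in\{0,1\}^n$ write $\chi_i(x)=(-1)^{x_i}$, so that $\widehat f(\{i\})=\mathbb{E}[f\chi_i]$. Define the normalized coefficients
$$a_i=\widehat f(\{i\})\sqrt{3/W},$$
so that $\sum_i a_i^2=3$, and consider the linear form $L(x)=\sum_i a_i\chi_i(x)$. Then
$$\mathbb{E}[fL]=\sum_i a_i\widehat f(\{i\})=\sqrt{3W}.$$

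Since $0\le f\le 1$ and $\mathbb{E}[f]=\frac18$, we will bound $\mathbb{E}[fL]$ by a threshold (bathtub) inequality. For any real $t$,
$$f L \le f\,t + (L-t)_+ \quad\text{pointwise},$$
so $\mathbb{E}[fL]\le t/8+\mathbb{E}(L-t)_+$. Under the uniform measure $L$ has the law of $\sum_i\varepsilon_i a_i$, which is symmetric. Taking $t=2$ and using symmetry gives $\mathbb{E}(L-2)_+=\frac12\mathbb{E}(|L|-2)_+$.

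To apply Lemma \ref{lem_sign_sum} we need $|a_i|\le 1$. Use the trivial bound $|\widehat f(\{i\})|\le \mathbb{E}[f]=\frac18$, so $|a_i|\le 1$ holds automatically whenever $W\ge \frac{3}{64}$. Now suppose for contradiction that $W>\frac{3}{64}$. Then $|a_i|<1$ for every $i$, and Lemma \ref{lem_sign_sum} gives $\mathbb{E}(|L|-2)_+\le\frac14$. Hence
$$\sqrt{3W}\le \tfrac{2}{8}+\tfrac12\cdot\tfrac14=\tfrac38, \qquad\text{so}\qquad W\le \tfrac{3}{64},$$
a contradiction. Therefore $W\le\frac{3}{64}$.

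For equality, take $W=\frac{3}{64}$. Then $|a_i|\le1$ still holds, and equality must hold throughout the chain. Equality in Lemma \ref{lem_sign_sum} forces exactly three $a_i$ to equal $\pm1$ and the rest to vanish. Since $|a_i|=1$ means $|\widehat f(\{i\})|=\frac18=\mathbb{E}[f]$, each of these three $\chi_i$ is constant on $\mathcal F$, i.e. three coordinates are fixed on $\mathcal F$. Because $|\mathcal F|=2^{n-3}$, it follows that $\mathcal F$ is exactly an $(n-3)$-dimensional subcube. (Equality in the pointwise threshold inequality is consistent with this, but it is not even needed.)

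The main subtlety is the choice of threshold $t=2$, which matches the density $\frac18$ of the extremal subcube: there $L$ takes values in $\{\pm1,\pm3\}$, and $L=3$ occurs with probability exactly $\frac18$. A second point is the constraint $|a_i|\le1$; it is handled by arguing by contradiction and assuming $W\ge\frac3{64}$, as above.
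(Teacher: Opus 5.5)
Your proposal is correct and follows the paper's proof essentially line for line: you normalize the level-$1$ coefficients so that $\sum_i a_i^2=3$ and $|a_i|\le 1$ (using $|\widehat f(\{i\})|\le \frac18$ once $W\ge\frac{3}{64}$), bound $\mathbb{E}[fL]=\sqrt{3W}$ via the pointwise inequality $f(L-2)\le (L-2)_+$ and symmetry, and invoke Lemma~\ref{lem_sign_sum} for both the bound and the equality case. The only cosmetic difference is that you split the argument into a contradiction for $W>\frac{3}{64}$ plus a separate equality analysis at $W=\frac{3}{64}$, whereas the paper assumes $W\ge\frac{3}{64}$ once and treats both together.
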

\begin{proof}
Let us start by assuming $M=\sum_{i=1}^n \widehat{f}(\{i\})^2 \ge \frac{3}{64}$. We will show that equality is the only possibility.

Note that $$|\widehat{f}(\{i\})|=\left|\frac{1}{2^n} \sum_{z \in \mathcal{F}} (-1)^{z_i}\right| \le \frac{|\mathcal{F}|}{2^n}=\frac{1}{8}. $$
Therefore if we let $$a_i=\sqrt{3} \cdot \frac{\widehat{f}(\{i\})}{\sqrt{\sum_{i=1}^n \widehat{f}(\{i\})^2}},$$
we have $|a_i| \le \sqrt{3} \cdot \frac{1/8}{\sqrt{3/64}}=1$ and $\sum_{i=1}^n a_i^2=3$. So the assumptions of Lemma \ref{lem_sign_sum} are satisfied. Therefore if we let $X=\sum_{i=1}^n x_i a_i$ for $x=(x_1, \cdots, x_n)$ uniformly chosen from $\{-1, 1\}^n$, then
$$\mathbb{E}(|X|-2)_+ \le \frac{1}{4}.$$
Note that we can let $x_i=(-1)^{z_i}$, with $z_i$ uniform from $\{0,1\}$,
$$\mathbb{E}_{x}[f\cdot X]=\mathbb{E}_{z}[f\cdot X]=\sum_{i=1}^n a_i \mathbb{E}_z[f(z_1,\cdots, z_n)\cdot (-1)^{z_i}]=\sum_{i=1}^n a_i \widehat{f}(\{i\})=\sqrt{3(\sum_{i=1}^n \widehat{f}(\{i\})^2)}=\sqrt{3M}.$$
For each $z=(z_1, \cdots, z_n)$, clearly we have the pointwise inequality $f(z_1,\cdots, z_n)(X-2) \le (X-2)_{+}$. Taking expectations, and using that $X$ is symmetric about zero, we have
\begin{align*}
\sqrt{3M}&=\mathbb{E}_z[f \cdot X]=\mathbb{E}_z[f(X-2)] + 2\mathbb{E}[f]\\
&\le \mathbb{E}_x[(X-2)_+]+ \frac{1}{4}\\
&=\frac{1}{2}\mathbb{E}_x[(|X|-2)_+]+\frac{1}{4} \le \frac{1}{2}\cdot \frac{1}{4}+\frac{1}{4}=\frac{3}{8}.
\end{align*}
This gives 
$$\sum_{i=1}^n \widehat{f}(\{i\})^2 = M \le \frac{3}{64},$$
completing the proof of the inequality.

When the equality holds, we have $\mathbb{E}_x[(|X|-2)_+]=\frac{1}{4}$. Using the equality condition from Lemma \ref{lem_sign_sum}, for $$a_i=\sqrt{3} \cdot \frac{\widehat{f}(\{i\})}{\sqrt{\sum_{i=1}^n \widehat{f}(\{i\})^2}},$$
exactly three of them are $\pm 1$, and the rest are $0$. Without loss of generality, we can assume $\widehat{f}(\{1\}), \widehat{f}(\{2\}), \widehat{f}(\{3\}) \in \{-\frac{1}{8}, \frac{1}{8}\}$ and the others are zero. This immediately gives $\mathbb{E}_{X \sim\mathrm{Unif}(\mathcal F)}[(-1)^{X_i}] \in \{-1, 1\}$ for $i=1,2,3$. This clearly shows that $\mathcal{F}$ is a $(n-3)$-dimensional subcube of $\{0,1\}^n$ consisting of vectors whose first three coordinates are fixed.
\end{proof}

Now it is straightforward to combine the previous lemmas and complete the proof of our main theorem.

\begin{proof}[Proof of Theorem \ref{thm_main}]
For $|\mathcal{F}|=2^{n-3}$, $f=1_{\mathcal{F}}$ satisfies the conditions in Lemma \ref{lem_main}, thus
$$\sum_{i=1}^n \widehat{f}(\{i\})^2 \le \frac{3}{64}.$$
Applying Lemma \ref{lem_reduction}, we have
$$\mathbb E_{X,Y\sim\mathrm{Unif}(\mathcal F)}~d_H(X,Y)
=\frac n2-32\sum_{i=1}^n\widehat f(\{i\})^2 \ge \frac{n}{2}-32 \cdot \frac{3}{64} = \frac{n-3}{2}.
$$
This shows the average Hamming distance is at least $\frac{n-3}{2}$. The equality characterization also follows from Lemma \ref{lem_main}.
\end{proof}

\section{Concluding Remarks}
We believe the same technique should also lead to the following result for the density $\frac{1}{16}$ case: for $\mathcal{F} \subset \{0,1\}^n$ and $|\mathcal{F}|=2^{n-4}$, the average pairwise Hamming distance of $\mathcal{F}$ is at least $\frac{n-4}{2}$. In particular, we believe the following conjecture, an analogue of Lemma \ref{lem_sign_sum}, is true.
\begin{conjecture}
Let $\varepsilon_1, \cdots, \varepsilon_n$ be independent uniform signs from $\{-1, 1\}$. Suppose $|a_i| \le 1$, and $\sum_{i=1}^n a_i^2=4$. Then
$$\mathbb{E}\left(\left|\sum_{i=1}^n \varepsilon_i a_i\right|-3\right)_{+} \le \frac{1}{8}.$$
Equality holds if and only if exactly four of the $a_i$'s are $\pm 1$, and the rest are $0$.
\end{conjecture}
We believe it can be proved similarly by constructing a function $\Phi(u)$ using higher degree polynomials in the cosine functions, such that $\Phi(u)$ is comparable to  the positive part of $|u|-3$, as we have done in Lemma \ref{lem_cos_hard}. And it would immediately imply the desired result for density $\frac{1}{16}$.

Below is another natural question: for given $n$, and a sequence $a_0, \cdots, a_n$ of real numbers, given a subset $\mathcal{F} \subset \{0,1\}^n$ of fixed size, we may consider the weighted sum
$$W(\mathcal{F})=\sum_{x \in \mathcal{F}} \sum_{y \in \mathcal{F}} a_{d_H(x,y)},$$
and ask which $\mathcal{F}$ minimizes $W(\mathcal{F})$ if the dimension $n$ and the size $|\mathcal{F}|$ are fixed. For example, when $a_i=i$ this corresponds to the average pairwise Hamming distance problem studied in this paper. When $a_1=-1$ and all other $a_j=0$, this is exactly the edge isoperimetric inequality for the hypercube. From these examples, we may suggest the following problem:

\begin{question}
Is it true that if a sequence of non-negative real numbers $\{a_i\}$ is non-decreasing and concave, then for fixed $M \in \{2^{n-4}, 2^{n-3}, 2^{n-2}, 2^{n-1}\}$, the corresponding subcube construction always minimizes $W(\mathcal{F})$, among all $\mathcal{F} \subset \{0,1\}^n$ with $|\mathcal{F}|=M$?
\end{question}

One can also ask the following question, inspired by the edge isoperimetric inequality:
\begin{question}
Which infinite sequences $\{a_i\}_{i=0,1,2,\cdots }$ of constants, independent of $n$, have the following property: for every $n$ and every  $d \in \{0,1, \cdots, n\}$, the $d$-dimensional subcube always minimizes $W(\mathcal{F})$, among all $\mathcal{F} \subset \{0,1\}^n$ with $|\mathcal{F}|=2^d$?
\end{question}
\medskip

\noindent \textbf{Declaration of AI use.} ChatGPT 6 Astra was used to explore trigonometric polynomials that lead to the proof of Lemma \ref{lem_cos_hard}, as well as proofreading and language polishing. All AI-assisted outputs were independently checked, verified, and simplified by the authors, who assume full responsibility for the content of this work.\\

\noindent \textbf{Acknowledgment. }We are deeply grateful to Fan Chang and Lei Yu for bringing this elegant conjecture to our attention. Their insights and suggestions during the early stages of this project played an important role in shaping the direction of this work.

\end{document}